\documentclass[a4paper]{amsart}

\usepackage{amsmath,amssymb,amsthm,graphicx,enumerate}

\newtheorem{theorem}{Theorem}

\theoremstyle{definition}
\newtheorem{fact}{Fact}
\newtheorem{example}{Example}

\begin{document}

\title{An improved lower bound for finite additive 2-bases}

\author{Jukka Kohonen}
\address{Department of Computer Science, Aalto University, Espoo, Finland}
\email{jukka.kohonen@aalto.fi}

\begin{abstract}
  A set of non-negative integers $A$ is an additive 2-basis with range
  $n$, if its sumset $A+A$ contains $0, 1, \ldots, n$ but not $n+1$.
  Explicit bases are known with arbitrarily large size $|A|=k$ and
  $n/k^2 \ge 2/7 > 0.2857$.  We present a more general construction
  and improve the lower bound to $85/294 > 0.2891$.
\end{abstract}

\maketitle

\smallskip
\noindent \textbf{Keywords.}
  Finite additive basis;
  Additive number theory

\section{Introduction}
\label{sec:intro}
A set of non-negative integers $A$ is an additive 2-basis of size
$k=|A|$ and range $n=n(A)$, if its sumset $A+A$ contains the integers
$0, 1, \ldots, n$ but not $n+1$.  The maximal ranges \[n(k) =
\max_{|A|=k} n(A)\] are known up to $n(25)=212$ \cite{kohonen2014}.
Lacking an explicit formula for $n(k)$, attention has been paid to
upper and lower bounds proportional to $k^2$.  An easy counting
argument shows that $n(k) \le k^2/2+k/2$.  The simple construction $A
= \{0, 1, \ldots, t, 2t, \ldots, t^2\}$ shows that $n(k) \ge k^2/4$.

The upper bound has been improved several times.  Yu \cite{yu2015}
recently proved that
\[
\limsup_{k \to \infty} \frac{n(k)}{k^2} \le 0.4585.
\]
For the lower bound, an explicit construction by Mrose \cite{mrose1979}
shows that
\[
\liminf_{k \to \infty} \frac{n(k)}{k^2} \ge 2/7 > 0.2857.
\]
Kl\o ve and Mossige \cite{mossige1981} presented another construction
that achieves the same factor $2/7$.  In this note we show that
\begin{equation}
  \liminf_{k \to \infty} \frac{n(k)}{k^2} \ge 85/294 > 0.2891.
  \label{eq:liminf}
\end{equation}

For simplicity we define the size of a basis as $k=|A|$, including the
necessary zero element.  Often in the literature the zero is not counted,
but this makes no difference in the asymptotic ratios.

\section{Generalized Mrose basis}
\label{sec:const}

For finite arithmetic progressions, translation of a set by a
constant, and pointwise multiplication we use the notation
\begin{align*}
  [a, b] &= \{a, a+1, \ldots, b\},\\
  [a, (m), b] &= \{a, a+m, \ldots, b\},\\
  A + h &= \{a+h : a \in A\},\\
  h \cdot A &= \{ha : a \in A\}.
\end{align*}

Let an integer $t \ge 2$ be given.  We will build an additive basis
from translated copies of three \emph{elementary segments}:
\begin{align*}
  V &= [0, t], \\
  H &= [0, (t), t^2-t], \\
  S &= [0, (t+1), t^2-1].
\end{align*}
Note that $|V|=t+1$ and $|H|=|S|=t$.  It will be beneficial to
visualize integers as mapped to planar coordinates by $i \mapsto
(\lfloor i/t \rfloor, i \bmod t)$.  Then $V$ is a vertical line with
an extra element, $H$ is a horizontal line, and $S$ is a slanted line,
as illustrated in Figure~\ref{fig:segments}.

\begin{figure}[tb]
  \includegraphics[height=2.5cm]{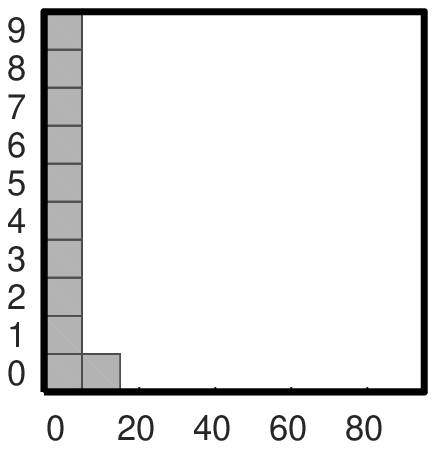}
  \hfill
  \includegraphics[height=2.5cm]{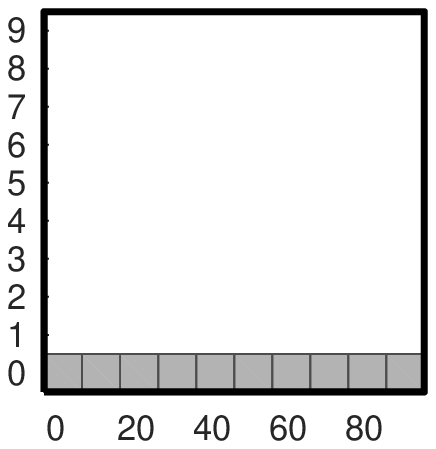}
  \hfill
  \includegraphics[height=2.5cm]{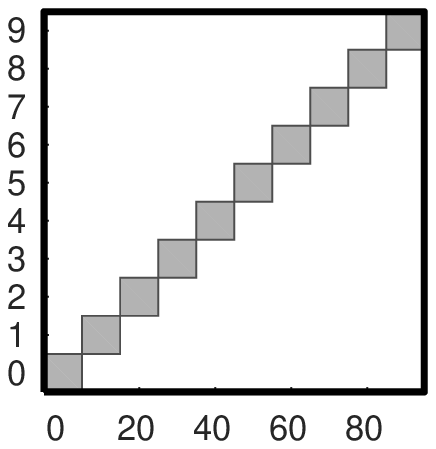}
  \caption{Three elementary segments: $V$, $H$, and $S$ (with $t=10$).}
  \label{fig:segments}
\end{figure}

\begin{figure}[tb]
  \includegraphics[height=2.5cm]{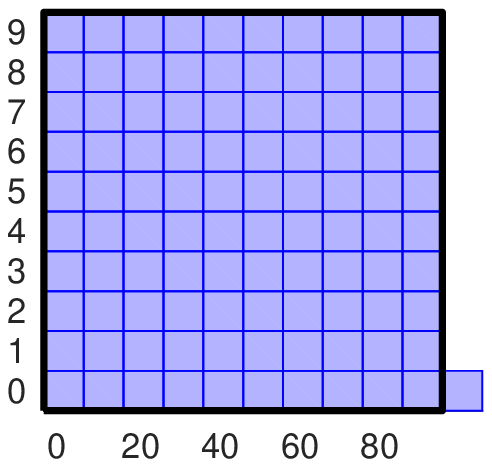}
  \hfill
  \includegraphics[height=2.5cm]{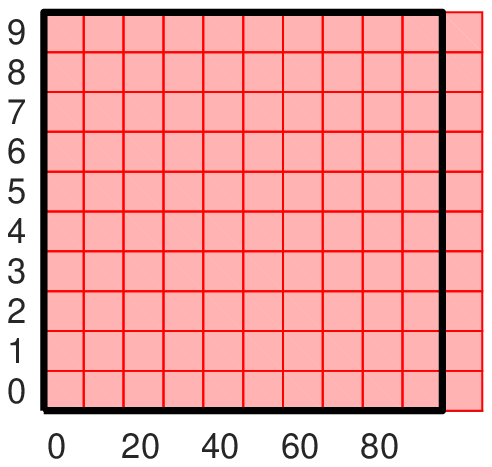}
  \hfill
  \includegraphics[height=2.5cm]{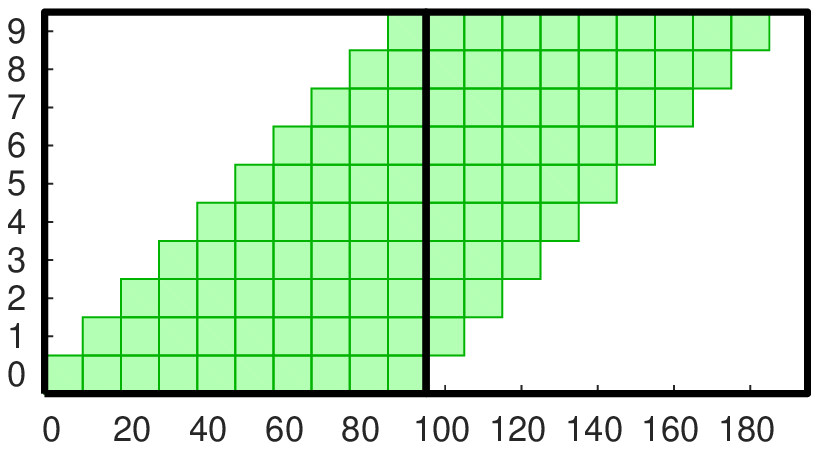}
  \caption{Sumsets of elementary segments: $V\!+\!H$ (blue), $V\!+\!S$
    (red), and $H\!+\!S$ (green).}
  \label{fig:tiles}
\end{figure}

The elementary segments give rise to six pairwise sumsets, but
$V\!+\!V$, $H\!+\!H$ and $S\!+\!S$ are of negligible size $O(t)$ and
will be ignored here.  We concentrate on $V\!+\!H$, $V\!+\!S$, and the
parallelogram-shaped $P=H\!+\!S$, each of which has size at least
$t^2$ as illustrated in Figure~2.  The following facts are easily
verified.
\begin{fact}
  Both $V+H$ and $V+S$ contain the square $Q = [0, t^2-1]$.
\end{fact}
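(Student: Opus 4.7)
My plan is to prove each containment separately by exhibiting, for every $i \in [0, t^2-1]$, an explicit decomposition $i = v + h$ or $i = v + s$ with the summands coming from the appropriate elementary segments. Both decompositions are simply the division algorithms applied to $i$, with divisors $t$ and $t+1$ respectively.

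For $V+H \supseteq Q$, I would write $i = qt + r$ by division with remainder by $t$, so $0 \le r \le t-1$ and $q = \lfloor i/t \rfloor$. Since $i \le t^2-1$, we get $q \le t-1$, and therefore $qt \in \{0, t, 2t, \ldots, (t-1)t\} = H$. Also $r \in [0, t-1] \subseteq [0, t] = V$, giving $i = r + qt \in V + H$.

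For $V+S \supseteq Q$, I would do the analogous trick with base $t+1$: write $i = j(t+1) + v$ with $0 \le v \le t$ and $j = \lfloor i/(t+1) \rfloor$. Then $v \in V$. For the other factor, since $i \le t^2 - 1 = (t-1)(t+1)$, we have $j \le t-1$, so $j(t+1) \in \{0, t+1, 2(t+1), \ldots, (t-1)(t+1)\} = S$. Hence $i = v + j(t+1) \in V + S$.

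There is no serious obstacle here: the statement is really just the observation that $V$ acts as a complete set of residues modulo either $t$ or $t+1$ (it has $t+1$ consecutive integers, hence contains at least one representative of each residue class) while $H$ and $S$ are complete systems of multiples of $t$ and $t+1$ respectively, up to $t^2-1$. The only point worth verifying carefully is the upper bound on the quotient, which in both cases follows from $i \le t^2 - 1$ and the identity $t^2 - 1 = (t-1)(t+1)$.
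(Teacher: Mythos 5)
Your proof is correct: the division algorithm with divisors $t$ and $t+1$, together with the identity $t^2-1=(t-1)(t+1)$ to bound the quotient, is exactly the routine verification the paper leaves implicit when it states this fact is ``easily verified.'' Nothing is missing.
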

\begin{fact}
  The union of two consecutive parallelograms $P$ and $P+t^2$ contains
  the square $Q+t^2$.
  \label{fact:parallelogram}
\end{fact}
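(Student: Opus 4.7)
The plan is to work in the planar coordinates $i \mapsto (\lfloor i/t\rfloor,\, i \bmod t)$ introduced just before the fact, since in those coordinates both $Q+t^2$ and $P$ have transparent descriptions: $Q+t^2=[t^2,2t^2-1]$ becomes the axis-aligned $t\times t$ square of columns $\{t,\ldots,2t-1\}$ with rows $\{0,\ldots,t-1\}$, while $P$ is visibly the parallelogram of Figure~\ref{fig:tiles}.

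First I would parametrise $P=H+S$ by writing a typical element as
\[
  ht + s(t+1) \;=\; (h+s)t + s, \qquad 0\le h,s\le t-1.
\]
Because $0\le s<t$, the quotient and remainder modulo $t$ are $h+s$ and $s$, so this element has coordinates $(h+s,\,s)$. Hence row $s$ of $P$ is exactly the block of $t$ consecutive columns $\{s,s+1,\ldots,s+t-1\}$. Translating by $t^2$ shifts the column index by $t$ without touching the row, so row $s$ of $P+t^2$ is $\{s+t,s+t+1,\ldots,s+2t-1\}$. The two blocks meet at $s+t$, so in each row $s$ the union $P\cup(P+t^2)$ covers $\{s,s+1,\ldots,s+2t-1\}$.

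To conclude I would observe that for every $s\in\{0,\ldots,t-1\}$ one has $s\le t$ and $s+2t-1\ge 2t-1$, so $\{t,t+1,\ldots,2t-1\}\subseteq\{s,s+1,\ldots,s+2t-1\}$. Since the rows of $Q+t^2$ are precisely $s\in\{0,\ldots,t-1\}$ and the columns are $\{t,\ldots,2t-1\}$, this gives $Q+t^2\subseteq P\cup(P+t^2)$. There is no genuine obstacle here; the one subtlety is noticing that $s<t$ makes the quotient/remainder reading of $(h+s)t+s$ unambiguous, so no case split on whether $h+s$ exceeds $t-1$ is required.
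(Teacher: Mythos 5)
Your proof is correct and is essentially the paper's own argument made explicit: the paper simply declares this fact ``easily verified'' with reference to its figure, and your row-by-row computation in the planar coordinates $i \mapsto (\lfloor i/t\rfloor,\, i \bmod t)$ — showing row $s$ of $P$ occupies columns $\{s,\ldots,s+t-1\}$, so $P\cup(P+t^2)$ covers columns $\{s,\ldots,s+2t-1\}\supseteq\{t,\ldots,2t-1\}$ in every row — is exactly the verification the figure is meant to convey. No gaps; the one point needing care (that $0\le s<t$ makes the quotient--remainder reading of $(h+s)t+s$ unambiguous) is handled correctly.
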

\begin{fact}
If elementary segments are translated, their sumsets are likewise
translated: for example, $(V+i) + (H+j) = (V\!+\!H)+(i+j) \supseteq
Q+(i+j)$.
\end{fact}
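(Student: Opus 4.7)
The plan is to verify Fact 3 by unwinding the definitions of Minkowski sumset and translation. For any sets $A, B \subseteq \mathbb{Z}$ and integers $i, j$, I will first establish the purely formal identity $(A + i) + (B + j) = (A + B) + (i + j)$. Both sides consist of exactly the elements $a + b + i + j$ with $a \in A$ and $b \in B$: an arbitrary element of the left-hand side is $(a+i)+(b+j)$, which rewrites as $(a+b)+(i+j)$ and therefore lies in the right-hand side; conversely, $(a+b)+(i+j) = (a+i)+(b+j)$ lies in the left-hand side. Specializing to $A = V$ and $B = H$ gives the first equality in the statement of Fact 3.

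For the inclusion $(V + H) + (i + j) \supseteq Q + (i + j)$, I would invoke Fact 1, which supplies $V + H \supseteq Q$, and observe that translating both sides by a common integer trivially preserves set inclusion. Chaining this with the sumset identity above yields $(V + i) + (H + j) \supseteq Q + (i + j)$, as claimed.

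There is no real obstacle here: both ingredients (the translation-compatibility of Minkowski sums and Fact 1) are entirely formal, which is why the author calls the fact ``easily verified.'' The reason the statement is phrased as a single representative example rather than a general lemma is that the same two-line verification applies verbatim to any pair of translated elementary segments drawn from $\{V, H, S\}$ (and, using Fact 2 in place of Fact 1, to unions of two consecutive translates of $P$). In the later sections this justifies shifting the identification of ``which translate covers which square'' to purely additive bookkeeping of the shifts $i+j$, without having to reprove containment each time.
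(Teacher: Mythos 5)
Your proof is correct and matches the paper's intent: the paper states this fact without proof as ``easily verified,'' and your two-step verification (the formal translation identity for Minkowski sums, followed by Fact 1 and the observation that translation preserves inclusion) is exactly the routine argument being alluded to.
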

Consider a basis constructed from $\ell$ elementary segments, placed
at specified multiples of $t^2$.  More precisely, if $I,J,K$ are sets
of non-negative integers, with $|I|+|J|+|K|=\ell$, we define
\begin{equation}
  A = (V + t^2\!\cdot\!I) \;\cup\; (H + t^2\!\cdot\!J) \;\cup\; (S + t^2\!\cdot\!K).
  \label{eq:vhs}
\end{equation}
We say that $A$ is a \emph{generalized Mrose basis} with placement
$(I,J,K)$ and segment length $t$.  Using the aforementioned facts we
have
\begin{align*}
  & A+A \\
  & \supseteq
  \bigl( (V\!+\!H) + (I'\!+\!J') \bigr) \;\cup\;
  \bigl( (V\!+\!S) + (I'\!+\!K') \bigr) \;\cup\;
  \bigl( (H\!+\!S) + (J'\!+\!K') \bigr) \\
  & \supseteq
  \bigl( Q + (I'\!+\!J') \bigr) \;\cup\;
  \bigl( Q + (I'\!+\!K') \bigr) \;\cup\;
  \bigl( P + (J'\!+\!K') \bigr),
\end{align*}
where we have written $I'=t^2\!\cdot\!I$ for brevity, and $J'$, $K'$
likewise.  In other words, $A+A$ covers squares at locations
$t^2\!\cdot\!(I\!+\!J)$ and $t^2\!\cdot\!(I\!+\!K)$, and parallelograms
at locations $t^2\!\cdot\!(J\!+\!K)$.

We now face the combinatorial problem of choosing and placing $\ell$
copies of elementary segments, so as to maximize the number $m$ of
covered consecutive squares beginning from~$0$.  If $m$ such squares
are covered, then \eqref{eq:vhs} is an additive basis of size $k \le
\ell (t+1)$ and range
\[
n \ge mt^2-1 \ge ck^2r-1,
\]
where $c = m/\ell^2$ and $r=(t/(t+1))^2$.  The factor $r$ appears
because vertical segments have $t+1$ elements, but $r$ tends to 1 as
$t\to\infty$.

\begin{figure}[tb]
  \includegraphics[width=\textwidth]{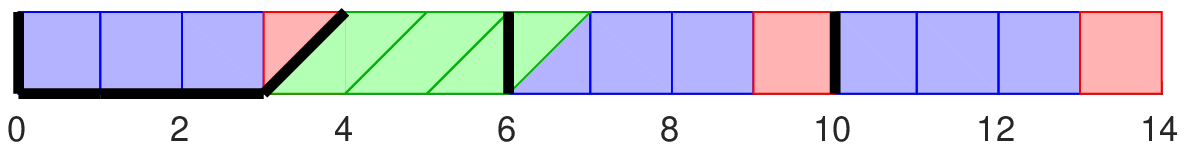}

  \vspace{2cm}
  
  \includegraphics[width=\textwidth]{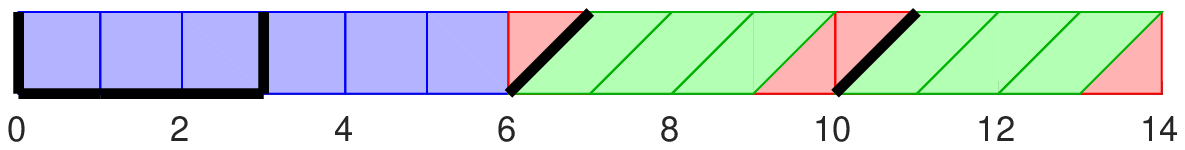}
  \caption{Two constructions with $\ell=7$, $m=14$, and $t$ large.  A
    unit square represents an interval of $t^2$ integers.  The
    elementary segments are shown as thick black lines.  Copies of
    $V\!+\!H$ and $V\!+\!S$ are shown as blue and red squares,
    respectively.  Copies of $H\!+\!S$ are shown as green
    parallelograms.  Some squares are only partially visible due to
    overlap.}
  \label{fig:mroseklove}
\end{figure}

\begin{example}
  Choosing $\ell=2$ and $I=J=\{0\}$, $K=\varnothing$ we have
  $I\!+\!J=\{0\}$, thus $m=1$ and $c=m/\ell^2=1/4$.  This is
  essentially the simple construction mentioned in the introduction.
\end{example}

\begin{example}
  Choosing $\ell=7$ and $I=\{0, 6, 10\}$, $J=\{0,1,2\}$, $K=\{3\}$
  gives a basis that is structurally similar to that of Mrose
  \cite{mrose1979}, illustrated in the top of
  Figure~\ref{fig:mroseklove}.  The copies of $V\!+\!H$ and $V\!+\!S$
  cover squares $Q + t^2 \cdot \{0,1,2,3, 6,7,\ldots,13\}$.  The
  copies of $H\!+\!S$ cover parallelo\-grams $P+t^2\cdot\{3,4,5\}$,
  containing in particular the squares $Q+t^2\cdot\{4,5\}$.  Since
  $m=14$ consecutive squares from $0$ are covered, we have
  $c=m/\ell^2=2/7$, asymptotically matching Mrose's result.
\end{example}

\begin{example}
  Choosing $\ell=7$ and $I=\{0, 3\}$, $J=\{0,1,2\}$, $K=\{6,10\}$ we
  obtain another basis with $m=14$, illustrated in the bottom of
  Figure~\ref{fig:mroseklove}.  This basis is similar to that of Kl\o
  ve and Mossige \cite{mossige1981}.
\end{example}

One can now try different sizes $\ell$ and placements $(I,J,K)$,
seeking to maximize $m/\ell^2$.  With a simple computer program we
searched through placements of size $\ell \le 17$, but found none with
$m/\ell^2>2/7$.  However, from a combination of computer-based search
and manual design, we have the following result.

\begin{figure}[tb]
  \includegraphics[width=\textwidth]{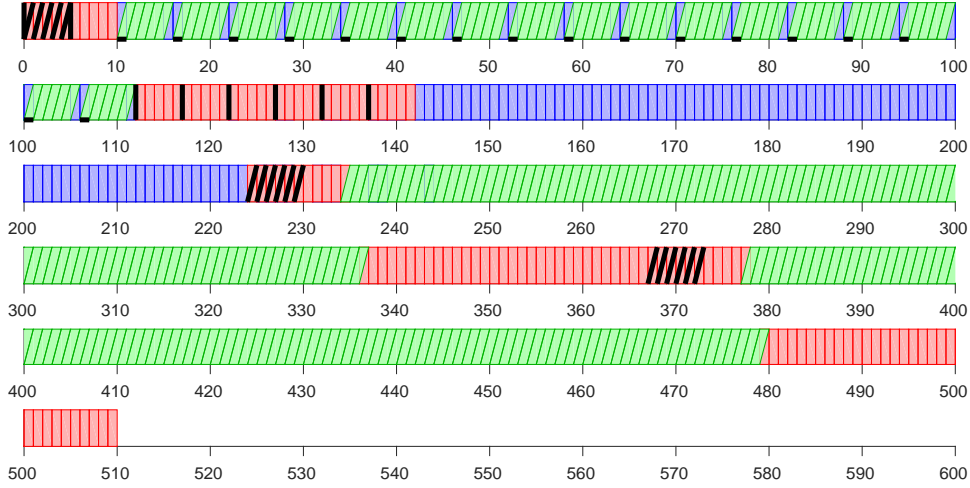}
  \caption{A construction of $\ell=42$ elementary segments. shown as
    thick black lines.  Copies of $V+H$ are shown in blue, copies of
    $V+S$ in red, and copies of $H+S$ in green (some not visible due
    to overlap).}
  \label{fig:basis42}
\end{figure}

\begin{theorem}
  There is a placement $(I,J,K)$ with $\ell=42$ such that $A+A$ covers
  $m=510$ consecutive squares beginning from zero.
\end{theorem}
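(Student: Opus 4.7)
The proof is constructive: I would exhibit explicit non-negative integer sets $I, J, K$ with $|I|+|J|+|K|=42$, and then verify by direct computation that the sumsets $I+J$, $I+K$, and $J+K$ jointly cover the square-indices $\{0,1,\ldots,509\}$. Recall from the discussion preceding the theorem that position $s$ is covered whenever $s \in (I+J) \cup (I+K)$, and, by Fact~\ref{fact:parallelogram}, also whenever $\{s-1,s\} \subseteq J+K$, since then the two adjacent parallelograms $P+t^2(s-1)$ and $P+t^2 s$ in $A+A$ together contain the missing square $Q+t^2 s$. The combinatorial problem therefore reduces to choosing $I,J,K$ so that every $s \in \{0,1,\ldots,509\}$ is covered in one of these two modes.

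Locating such a placement in the vast search space is the main obstacle. Exhaustive enumeration at $\ell=42$ is far out of reach, so I would combine several heuristics: (i) take inspiration from the $\ell=7$ Mrose and Kl\o ve--Mossige bases, where $J$ and $K$ are chosen so that $J+K$ forms a long interval and supplies ``parallelogram coverage'' for free; (ii) exploit the $s \mapsto 509-s$ reflection symmetry of the target range to halve the effective search; (iii) fix candidate $(J,K)$ pairs by computer search, identify which positions in $[0,509]$ are already covered by the consecutive pairs in $J+K$, and then fit $I$ so that $(I+J) \cup (I+K)$ plugs the remaining gaps subject to $|I|+|J|+|K|=42$. As the author notes, the final placement is obtained from a mix of computer search and manual design.

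Once a candidate triple is in hand, the verification is purely finite: form the three sumsets, extract the parallelogram-implied indices $\{s : \{s-1,s\} \subseteq J+K\}$, and confirm that the union of this set with $I+J$ and $I+K$ contains every integer from $0$ to $509$. Figure~\ref{fig:basis42} will serve as the geometric witness, and substituting the resulting $(\ell,m)=(42,510)$ into the bound $n \ge m t^2 - 1$ with $k \le 42(t+1)$ then yields, in the limit $t \to \infty$, the factor $c = 510/42^2 = 85/294$ claimed in \eqref{eq:liminf}.
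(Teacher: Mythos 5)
Your reduction of the theorem to a finite combinatorial check is exactly right: a square index $s$ is covered if $s \in (I+J)\cup(I+K)$, or if $\{s-1,s\}\subseteq J+K$ via Fact~\ref{fact:parallelogram}, and the task is to choose $I,J,K$ with $|I|+|J|+|K|=42$ covering all of $[0,509]$. But that is where your argument stops, and that is precisely where the theorem's content lies. The statement is an existence claim whose only proof is the explicit witness, and you never exhibit one; describing search heuristics (symmetry, fixing $(J,K)$ first, plugging gaps with $I$) explains how one might find a placement, but it does not establish that a placement with $m=510$ exists. As written, the proposal proves nothing beyond the (correct) observation that verification would be finite once a candidate is in hand.

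For comparison, the paper's proof supplies the witness
\[
I = \{0,5\} \cup [112,(5),137], \qquad
J = [10,(6),106], \qquad
K = [0,4] \cup [224,229] \cup [367,372],
\]
with $|I|+|J|+|K|=8+17+17=42$, and then verifies coverage of $[0,509]$ by splitting it into ten subintervals, each handled either by an inclusion of the form $[a,b]\subseteq I+K$ or $[a,b]\subseteq I+J$ (direct square coverage), or by showing a run of consecutive elements in $J+K$ long enough to yield the intermediate squares via Fact~\ref{fact:parallelogram}. Note also one small inaccuracy in your verification recipe: the first subinterval $[0,9]$ and several later ones are covered through $I+K$, not through the parallelogram mechanism, so your step of ``extracting the parallelogram-implied indices'' is only one of the two coverage modes and both must be checked against the specific sets. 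To complete your proof you must either reproduce an explicit $(I,J,K)$ and carry out the finite check, or cite one; without that the theorem remains unproved.
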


\begin{proof}
  Let
  \begin{align*}
    I &= \{0,5\} \cup [112, (5), 137], \\
    J &= [10, (6), 106], \\
    K &= [0,4] \cup [224,229] \cup [367,372].
  \end{align*}
  Note that $|I|+|J|+|K|=8+17+17=42$.  Let us verify that $2A=A+A$
  covers the squares $Q+t^2\cdot[0,509]$ as claimed.  The proof
  proceeds by subintervals and is illustrated in
  Figure~\ref{fig:basis42}.

  \begin{enumerate}[(i)]
    \item
      $Q+t^2\cdot[0,9] \subseteq 2A$, since $[0,9] \subseteq
      \{0,5\}+[0,4] \subseteq I+K$.
  
    \item
      For each $j \in J$, we observe that $j+[0,4] \subseteq J+K$, so
      $2A$ covers consecutive parallelograms $P + t^2\cdot(j+[0,4])$,
      and in particular the squares $Q + t^2\cdot(j+[1,4])$.
      Combining this with the fact that the squares $Q +
      t^2\cdot(j+\{0,5\})$ are covered, it follows that $Q +
      t^2\cdot(j+[0,5])$ is covered.  As this holds for all $j \in J$,
      we see that $Q+t^2\cdot[10,111] \subseteq 2A$.

    \item
      $Q+t^2\cdot[112,141] \subseteq 2A$, since $[112,141] \subseteq [112, (5), 137] + [0,4] \subseteq I+K$.

    \item
  $Q+t^2\cdot[142,223] \subseteq 2A$, since $[142,223] \subseteq [112,
      (5), 137] + [10, (6), 106] \subseteq I+J$.

    \item  
      $Q+t^2\cdot[224,234] \subseteq 2A$, since $[224,234] \subseteq
      \{0,5\} + [224,229] \subseteq I+K$.

    \item
      $Q+t^2\cdot[235,335] \subseteq P+t^2\cdot[234,335]$ by
      Fact~\ref{fact:parallelogram}.  These consecutive parallelograms
      are covered by $2A$ since $[234,335] \subseteq
      [10,(6),106]+[224,229] \subseteq J+K$.
  
    \item
      $Q+t^2\cdot[336,366] \subseteq 2A$, since $[336,366] \subseteq
      [112, (5), 137] + [224,229] \subseteq I+K$.
  
    \item
      $Q+t^2\cdot[367,377] \subseteq 2A$, since $[367,377] \subseteq
      \{0,5\} + [367,372] \subseteq I+K$.
  
    \item
      $Q+t^2\cdot[378,478] \subseteq P+t^2\cdot[377,478]$ by
      Fact~\ref{fact:parallelogram}.  These consecutive parallelograms
      are covered by $2A$ since $[377,478] \subseteq
      [10,(6),106]+[367,372] \subseteq J+K$.
  
    \item
      $Q+t^2\cdot[479,509] \subseteq 2A$, since $[479,509] \subseteq
      [112, (5), 137] + [367,372] \subseteq I+K$.
  \end{enumerate}
\end{proof}

With the placement described above we have
\[
c=m/\ell^2 = 510/42^2 = 85/294.
\]
In more detail, for any integer $t \ge 2$, this placement gives a
generalized Mrose basis of size $k=42t+7$ and range $n\ge 510t^2$.  We
thus have
\[
\lim_{t\to\infty} \frac{n}{k^2} \ge 85/294 > 0.2891,
\]
justifying claim \eqref{eq:liminf}.

\section{Discussion of further improvement}

Striving for simplicity, we have opted to place the copies of
elementary segments at integer multiples of $t^2$.  It would be
possible to move them slightly further: for example, in Mrose's
original construction the first segment is $[0,t]$, and the second
segment begins at $2t$.  However, such changes would in general only
extend the range by an amount linear in $t$, and thus would not
improve the asymptotic ratio $n/k^2$.

For any additive basis of the form \eqref{eq:vhs}, a counting argument
provides an upper bound on $|A+A|$ (and hence on $n(A)$).  Let the
numbers of elementary segments of each kind be $\ell_I=|I|$,
$\ell_J=|J|$ and $\ell_K=|K|$.  Observing that $V+V$, $H+H$ and $S+S$
have size $O(t)$, and the three ``useful'' sumsets $V+H$, $V+S$ and
$H+S$ have size $t^2 + O(t)$, we have
\[
|A+A| \le (\ell_I\ell_J+\ell_I\ell_K+\ell_J\ell_K)t^2 + O(t).
\]
Subject to the constraint $\ell_I+\ell_J+\ell_K=\ell$, we have
\[
|A+A| \le (1/3)\ell^2t^2 + O(t) = (1/3)k^2 + O(k).
\]
In other words, no matter how well the placement $I,J,K$ is chosen,
the asymptotic ratio $n/k^2$ achievable through this construction from
three elementary segments cannot essentially exceed $1/3$.  If one
aims to exceed this ratio, one may want to consider four or more kinds
of elementary segments.  The challenge is then twofold: first, to
design elementary segments with conveniently-shaped sumsets that fit
together well; and second, to find a good placement of their copies.
This approach could be seen as a decomposition of an additive basis
into a ``structured part'' (elementary segments with fixed structure,
but arbitrary size) and an ``unstructured part'' (placement of
segments, perhaps through random or exhaustive search), reminiscent of
Bibak's general suggestion~\cite[p.~114]{bibak2013}.

\section*{Acknowledgements}
The research leading to these results has received funding from the
European Research Council under the European Union's Seventh Framework
Programme (FP/2007-2013) / ERC Grant Agreement 338077 ``Theory and
Practice of Advanced Search and Enumeration''.

\section*{References}

\bibliographystyle{plain} 
\bibliography{refs}

\end{document}